\documentclass[ reqno]{amsart}
\usepackage{graphicx} % Required for inserting images

\title[Vector valued continuous function space]{Vector valued continuous function spaces as $C^\ast$-algebras}
 \author[N. Hotwani]{Neha Hotwani${}^1$}
\address{Department of Mathematics\\
Shiv Nadar Institution of Eminence. Gautam Buddha
Nagar-201314, India}
\email{neha.hotwani@snu.edu.in, nehahotwani19@gmail.com}

\author[T. S. S. R. K. Rao]{T. S. S. R. K. Rao${}^2$}
\address{Department of Mathematics\\
Shiv Nadar Institution of Eminence. Gautam Buddha
Nagar-201314, India}
\email{srin@fulbrightmail.org}

\subjclass[2020]{47L07, 46B20, 46E40, 46L10 (Primary); 47A60, 47L05 (Secondary)}

\keywords{Spaces of vector-valued continuous functions, $C^*$-algebras, $C^*$-extreme points, Linear extreme points,  Strongly extreme points, Spaces of operators on Hilbert spaces, Geometry of Banach spaces}
\usepackage{amsmath,amsthm, amsfonts, amssymb, setspace, color, enumerate, bbold}
      \newtheorem{theorem}{Theorem}[section]

      \newtheorem{remark}[theorem]{Remark}
      \newtheorem{lemma}[theorem]{Lemma}
      
      \newtheorem{proposition}[theorem]{Proposition}

      \def\R{{\mathbb R}}
      \def\N{{\mathbb N}}

      \def\cW{\mathcal W}
      
      \def\cA{\mathcal A}

      \def\bb1{\mathbb 1}

\usepackage{ulem}
\usepackage{titlesec}
\usepackage{xstring}

\titleformat{\section}
  {\normalfont\Large\bfseries\centering} % format
  {\thesection}{1em}{}                   % numbering + spacing

\renewcommand{\thesection}{\arabic{section}}
\newcommand{\trace}{\operatorname{trace}}

\usepackage{ulem}
\date{}

\usepackage[dvipsnames]{xcolor}

\usepackage[pagebackref,colorlinks,linkcolor=Maroon,citecolor=MidnightBlue,urlcolor=NavyBlue,hypertexnames=true]{hyperref}

    \newtheorem{corollary}[theorem]{Corollary}

\begin{document}
\begin{abstract}
Let \(\Omega\) be a compact Hausdorff space, and let \(\cA\) be a unital \(C^*\)-algebra. In this study, we continue our examination of the comparison between \(C^*\)-extreme points and linear extremal structures of the unit ball, in the vector-valued \(C^*\)-algebra \(C(\Omega, \cA)\), building upon the work initiated in \cite{HR}. We first enlarge the class of $C^\ast$-algebras in which a $ C^\ast$-extreme point is an extreme point. We demonstrate that if \(\cA\) has a faithful tracial state, then any \(C^*\)-extreme point of the unit ball  \(C(\Omega, \cA)_1\) is a unitary. Additionally, we identify a classes of \(C^*\)-algebras where the concepts of \(C^*\)-extreme and pointwise \(C^*\)-extreme points in \(C(\Omega, \cA)_1\) coincide. We show this holds if a von Neumann algebra has a separable predual with the Radon-Nikodým property.
\end{abstract}
\maketitle

\section{Introduction}
\label{sec:Intro}
Let $\cA$ be a unital $C^\ast$-algebra with identity $\mathbf{1}_\cA$ and let $\cA_1$ denote the closed unit ball of $\cA$. We recall from (\cite{LP}) that
 an element $x \in \cA_1$ is said to be a \textbf{$C^*$-convex combination} of $k$ elements $x_1,\dots, x_k\in \cA_1$, if there exist $t_1, \dots, t_k\in \cA$ such that $\sum_{i=1}^kt_i^*t_i=\bold{1}_\cA$ and $x=\sum_{i=1}^kt_i^*x_it_i$. The $t_i$'s are known as the \textbf{coefficients} of this $C^*$- convex combination. If the coefficients, i.e., the $t_i$'s, are invertible, then this $C^*$-convex combination is called a \textbf{proper $C^*$- convex combination}. 
 $x\in \cA_1$ is said to be a \textbf{$C^*$-extreme point} of $\cA_1$ if, whenever $x$ can be written as a proper $C^*$-convex combination of $x_1, \dots, x_k \in \cA_1$, that is,
\[
x= \sum_{i=1}^kt_i^*x_it_i,
\]
where $t_1,\dots, t_k\in \cA$ are invertible with $\sum_{i=1}^kt_i^*t_i=\bold{1}_\cA$, then each $x_i$ is unitarily equivalent to $x$, i.e., there exist unitaries $u_1, \dots, u_k\in \cA$ such that $x_i=u_i^*xu_i$ for $i=1, \dots, k$.

Let \(\Omega\) be a compact Hausdorff space, and let \(X\) be a Banach space. We denote the space of all \(X\)-valued continuous functions by \(C(\Omega, X)\), which is equipped with the supremum norm. The study of the extremal structure of functions \(f \in C(\Omega, X)_1\) and the pointwise behavior of \(f(\omega) \in X_1\) for \(\omega \in \Omega\) is a well-explored area, as noted in \cite{DHS} and its references.
Recall that $x\in X_1$, is called \textbf{strongly extreme point} if  for any sequences $\{x_n\}$ and $\{y_n\}$ in $X_1$,  $\frac{x_n+y_n}{2} \to x$ implies $x_n-y_n \to 0$. 
% and $y_n \to x$.
For more details, see \cite{DHS}.
 In \cite[Proposition~7]{DHS}, it was shown that \(f \in C(\Omega, X)_1\) is a strongly extreme point if and only if \(f(\omega)\) is a strongly extreme point in \(X_1\) for every \(\omega \in \Omega\). 

In our comparison of algebraic and linear extremal notions, as outlined in \cite{HR}, we demonstrated that in the unit ball of a von Neumann algebra, the concepts of \(C^*\)-extreme points, extreme points, and strongly extreme points coincide. Consequently, we established that in \(\mathcal{A}_1\), any extreme point is also a strongly extreme point.
\vskip 1em

Motivated by the findings in \cite{DHS} and \cite{HR}, we examine the space \( C(\Omega, \mathcal{A}) \), where \( \Omega \) is a compact Hausdorff space and \( \mathcal{A} \) is a unital \( C^* \)-algebra. It is well established that \( C(\Omega, \mathcal{A}) \) is a unital \( C^* \)-algebra. Additionally, if \( \Omega \) is infinite and \( \mathcal{A} \) is an infinite-dimensional von Neumann algebra, then \( C(\Omega, \mathcal{A}) \) cannot be a von Neumann algebra.

We are particularly interested in whether the property that \( f \in C(\Omega, \mathcal{A})_1 \) is a \( C^* \)-extreme point implies that \( f(\omega) \) is a \( C^* \)-extreme point of \( \mathcal{A}_1 \) for every \( \omega \in \Omega \). In \cite{HR}, we were able to address this question when the set of isolated points in \( \Omega \) was dense and \( \mathcal{A} \) was a von Neumann algebra \( \mathcal{M} \). In this article, we extend that result to the case where \( \Omega \) is a compact Hausdorff space and \( \mathcal{A} \) is a $C^\ast$-algebra with a faithful tracial state. Along the way, we also show that in a $C^\ast$-algebra with a faithful tracial state
$C^\ast$-extreme points of the unit ball are precisely unitaries (Corollary \ref{c:ptwise C_ext in faithful C_alg}).

We denote by $C^*\text{-}\operatorname{ext}\bigl(C(\Omega,\cA)\bigr)_1$, the set of $C^\ast$-extreme points and drop the prefix $C^\ast$, to denote the set of linear extreme points. We refer to Kadison's monographs \cite{KR, KR2} for the terminology and results we use here.

 \section{Main results}
 \label{sec:main}
For a given $C^*$-algebra $\cA$, it is easy to see that any unitary is a $C^\ast$-extreme point of $\cA_1$. For instance, see \cite[Remark~2.2]{HR}. Also, $f \in C(\Omega,\cA)_1$ is a unitary if and only if $f(\omega)$ is a unitary in $\cA$ for all $\omega \in \Omega$. This section examines a similar behavior in the $C^\ast$-extreme point context.

We begin with a result showing that $C^\ast$-algebra-valued $C^\ast$-extreme points take values in the unit sphere. We denote by $\mathbf{1}_{\cA}$ the identity of $\cA$ and by $\mathbf{1}$ the constant function on $\Omega$, taking $\mathbf{1}_{\cA}$ as values.
 \begin{theorem}

Let $\Omega$ be a compact Hausdorff space and $\cA$ be a unital $ C^\ast$-algebra. If
$f\in C^*\text{-}\operatorname{ext}\bigl(C(\Omega,\cA)\bigr)_1$,
then
$\|f(\omega)\|=1$
for every $\omega\in\Omega$.
\end{theorem}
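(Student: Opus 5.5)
Suppose, towards a contradiction, that $\|f(\omega_0)\|<1$ for some $\omega_0\in\Omega$. We will write $f$ as a proper $C^*$-convex combination of two elements of the unit ball, one of which cannot be unitarily equivalent to $f$.

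\textbf{Step 1: a bump function.} By continuity of $\omega\mapsto\|f(\omega)\|$, there are $\delta>0$ and an open neighbourhood $U$ of $\omega_0$ with $\|f(\omega)\|\le 1-\delta$ for all $\omega\in U$. By Urysohn's lemma we pick $\phi\in C(\Omega,[0,1])$ with $\phi(\omega_0)=1$ and $\operatorname{supp}\phi\subseteq U$.

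\textbf{Step 2: a proper $C^*$-convex combination.} Take scalar coefficients $t_1=t_2=\tfrac{1}{\sqrt2}\mathbf{1}$. These are invertible and central, and $t_1^*t_1+t_2^*t_2=\mathbf{1}$. Put
\[
g_1=f+\delta\phi\,\mathbf{1},\qquad g_2=f-\delta\phi\,\mathbf{1}.
\]
On $U$ we have $\|g_i(\omega)\|\le 1-\delta+\delta\phi(\omega)\le 1$, and off $U$ we have $g_i=f$. Hence $g_1,g_2\in C(\Omega,\cA)_1$, and
\[
f=t_1^*g_1t_1+t_2^*g_2t_2=\tfrac12(g_1+g_2).
\]
This is a proper $C^*$-convex combination.

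\textbf{Step 3: contradiction with unitary equivalence.} Since $f$ is $C^*$-extreme, there is a unitary $u\in C(\Omega,\cA)$ with $g_1=u^*fu$. Evaluating at $\omega_0$ gives
\[
f(\omega_0)+\delta\mathbf{1}_\cA=u(\omega_0)^*f(\omega_0)u(\omega_0).
\]
Conjugation by a unitary is an automorphism of $\cA$, so it preserves spectra. Therefore $\sigma(f(\omega_0))+\delta=\sigma(f(\omega_0))$. The spectrum $\sigma(f(\omega_0))$ is a nonempty compact subset of $\mathbb{C}$, so it has a point $\lambda$ of maximal real part. Then $\lambda+\delta$ also lies in $\sigma(f(\omega_0))$ and has strictly larger real part, a contradiction. (Since $\cA$ is unital, the spectrum is indeed nonempty and compact.)

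\textbf{Main obstacle.} The only delicate point is Step 3. A norm argument alone fails: $\|f(\omega_0)+\delta\mathbf{1}_\cA\|$ need not exceed $\|f(\omega_0)\|$ for non-normal elements. The spectral invariance argument handles this uniformly, with no assumptions on $\cA$.
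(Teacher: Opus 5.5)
Your proof is correct and follows the paper's argument. You perturb $f$ by $\pm\varepsilon\varphi\mathbf{1}$ near $\omega_0$, write $f$ as the proper $C^*$-convex combination with coefficients $\tfrac{1}{\sqrt2}\mathbf{1}$, and derive a contradiction from $\sigma(f(\omega_0))+\varepsilon=\sigma(f(\omega_0))$; your maximal-real-part argument spells out this final contradiction, which the paper leaves implicit (your side remark is imprecise, though: $\|f(\omega_0)+\delta\mathbf{1}_\cA\|$ can fail to exceed $\|f(\omega_0)\|$ even for normal elements, e.g.\ $f(\omega_0)=-\tfrac12\mathbf{1}_\cA$).
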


\begin{proof}
Suppose, to the contrary, that there exists $\omega_0\in\Omega$ such that
$\|f(\omega_0)\|<1.$
Set
$\delta=1-\|f(\omega_0)\|>0$.
Since $f$ is norm continuous, there exists an open neighborhood $U$ of
$\omega_0$ such that
\[
\|f(\omega)\|\leq 1-\frac{\delta}{2}
\qquad\text{for all }\omega\in U.
\]
Since $\Omega$ is a compact Hausdorff space, as the sets
$\{\omega_0\}$
and
$\Omega\setminus U$
are disjoint by Urysohn's lemma, there
exists
$\varphi\in C(\Omega)$ 
such that $0 \leq \varphi\leq 1$ and 
$\varphi(\omega_0)=1$
and
$\varphi(\omega)=0$ for all $\omega\in\Omega\setminus U$.
Choose
$0<\varepsilon<\frac{\delta}{2}$,
and define
\[
g_{\pm}(\omega)
=
f(\omega)\pm \varepsilon\varphi(\omega)\mathbf{1}_\cA,
\qquad \omega\in\Omega.
\]
Clearly,
$g_{\pm}\in C(\Omega,\cA)$.
We claim that
$g_{\pm}\in C(\Omega,\cA)_1$.
If $\omega\notin U$, then $\varphi(\omega)=0$, and hence
$g_{\pm}(\omega)=f(\omega)$,
so
$\|g_{\pm}(\omega)\|\leq 1$.
On the other hand, if $\omega\in U$, then
\[
\begin{aligned}
\|g_{\pm}(\omega)\|
&\leq \|f(\omega)\|
   +\varepsilon\varphi(\omega)\\
&\leq 1-\frac{\delta}{2}+\varepsilon\\
&<1.
\end{aligned}
\]
Thus
$\|g_{\pm}\|\leq 1$.
Moreover,
$f=\frac{g_++g_-}{2}$.
Equivalently,
\[
f
=
\left(\frac{1}{\sqrt{2}}\mathbf{1}\right)^*
g_+
\left(\frac{1}{\sqrt{2}}\mathbf{1}\right)
+
\left(\frac{1}{\sqrt{2}}\mathbf{1}\right)^*
g_-
\left(\frac{1}{\sqrt{2}}\mathbf{1}\right).
\]
% Since $\frac{1}{\sqrt{2}}\mathbf{1}$ is invertible,
Note that this is a proper
$C^*$-convex decomposition of $f$.
Since $f$ is a $C^*$-extreme point of $C(\Omega,\cA)_1$, there exists a
unitary
$u\in C(\Omega,\cA)$
such that
$g_+=u^*fu$.
Evaluating at $\omega_0$, we obtain
$g_+(\omega_0)
=
u(\omega_0)^*f(\omega_0)u(\omega_0)$.
Hence $g_+(\omega_0)$ and $f(\omega_0)$ are unitarily equivalent and
therefore they have the same spectrum. However since $\varphi(\omega_0)=1$,
$g_+(\omega_0)
=
f(\omega_0)+\varepsilon \mathbf{1}_\cA$.
Therefore,
$\sigma\bigl(g_+(\omega_0)\bigr)
=
\sigma\bigl(f(\omega_0)\bigr)+\varepsilon$, 
which is a contradiction.
Therefore,
$\|f(\omega)\|=1$
for every $\omega\in\Omega$.
\end{proof}
In the case of $\ell^\infty$-direct sum of $C^\ast$-algebras, it was shown in \cite[Theorem~3.6]{HR} that a unit vector $x$ in the direct sum is a $C^\ast$-extreme point if and only if each coordinate is a $C^\ast$-extreme point of the unit ball of the corresponding coordinate algebra. The following corollary is easy to see.
\begin{corollary}
    Let $\cA= \oplus_{n=1}^\infty \cA_n$ be the $\ell^\infty$-direct sum of a sequence $\{\cA_n\}_{n \geq 1}$ of $C^\ast$-algebras. 
    If
$f\in C^*\text{-}\operatorname{ext}\bigl(C(\Omega,\cA)\bigr)_1$,
then
$\|f(\omega)(n)\|=1$
for every $\omega\in\Omega$ and $n \geq 1$.
\end{corollary}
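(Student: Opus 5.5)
The plan is to mimic the proof of the preceding theorem coordinatewise. The alternative would be to identify $C(\Omega,\cA)$ with an $\ell^\infty$-direct sum and invoke \cite[Theorem~3.6]{HR}, but that identification fails: $C(\Omega,\oplus_n\cA_n)$ is not $\oplus_n C(\Omega,\cA_n)$, since continuity into the $\ell^\infty$-sum is uniform in $n$.

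Suppose, to the contrary, that $\|f(\omega_0)(n_0)\|<1$ for some $\omega_0\in\Omega$ and $n_0\geq 1$. Put $\delta=1-\|f(\omega_0)(n_0)\|$. The coordinate projection $\pi_{n_0}\colon\cA\to\cA_{n_0}$ is a contractive $*$-homomorphism, so $\omega\mapsto f(\omega)(n_0)$ is norm continuous. Hence there is an open neighbourhood $U$ of $\omega_0$ with $\|f(\omega)(n_0)\|\le 1-\delta/2$ for all $\omega\in U$.

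By Urysohn's lemma, choose $\varphi\in C(\Omega)$ with $0\le\varphi\le1$, $\varphi(\omega_0)=1$ and $\varphi\equiv0$ off $U$. Let $e_{n_0}\in\cA$ be the central projection whose $n_0$-th coordinate is $\mathbf 1_{\cA_{n_0}}$ and whose other coordinates are $0$. Fix $0<\varepsilon<\delta/2$ and define
\[
g_\pm(\omega)=f(\omega)\pm\varepsilon\varphi(\omega)e_{n_0}.
\]
The $g_\pm$ are continuous. At each $\omega$ they change only the $n_0$-th coordinate, so $\|g_\pm(\omega)\|=\max\bigl(\sup_{n\ne n_0}\|f(\omega)(n)\|,\ \|f(\omega)(n_0)\pm\varepsilon\varphi(\omega)\mathbf 1\|\bigr)\le 1$, using the bound on $U$ and $\varphi=0$ off $U$. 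Moreover $f=\tfrac12 g_++\tfrac12 g_-$. Taking coefficients $\tfrac1{\sqrt2}\mathbf 1$, this is a proper $C^*$-convex combination.

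$C^*$-extremality of $f$ then gives a unitary $u\in C(\Omega,\cA)$ with $g_+=u^*fu$. Evaluating at $\omega_0$ and applying $\pi_{n_0}$ gives
\[
f(\omega_0)(n_0)+\varepsilon\mathbf 1_{\cA_{n_0}}=v^*f(\omega_0)(n_0)v,
\]
where $v=u(\omega_0)(n_0)$ is a unitary in $\cA_{n_0}$. Unitarily equivalent elements have the same spectrum. So the nonempty compact set $\sigma\bigl(f(\omega_0)(n_0)\bigr)$ would be invariant under translation by $\varepsilon$, which is impossible; for instance, a point of maximal real part would be moved to a point of larger real part. This contradiction gives $\|f(\omega)(n)\|=1$ for all $\omega$ and $n$.

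I expect the only points needing care to be the following. First, the $\ell^\infty$-sum must be unital so that $e_{n_0}$ and $\mathbf 1$ exist; the $\cA_n$ are implicitly unital, as in the standing assumption. Second, the spectral contradiction must be made precise, and the maximal real part argument above does this.
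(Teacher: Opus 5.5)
Your proof is correct. Replacing $\mathbf 1$ by the central projection $e_{n_0}$ localizes the perturbation of the preceding theorem to the $n_0$-th coordinate. The norm estimate and the proper $C^*$-convex decomposition then go through unchanged. Applying $\pi_{n_0}$ to $g_+=u^*fu$ at $\omega_0$ gives the translated-spectrum contradiction, and you justify that step more carefully than the paper does. The argument tacitly needs $\cA_{n_0}\neq\{0\}$ so that the spectrum is nonempty, but the statement itself fails otherwise.

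The paper writes no proof. It calls the corollary easy to see right after citing \cite[Theorem~3.6]{HR}, so its intended route is a reduction rather than a rerun of the perturbation. Your reason for discarding that route is only half right. It is true that $C(\Omega,\oplus_n\cA_n)$ is not $\oplus_n C(\Omega,\cA_n)$, but a two-term splitting suffices:
\begin{itemize}
\item Put $\cA'=\oplus_{m\neq n_0}\cA_m$, so that $\cA=\cA_{n_0}\oplus\cA'$. Because this sum is finite, $C(\Omega,\cA)=C(\Omega,\cA_{n_0})\oplus C(\Omega,\cA')$ as $C^*$-algebras.
\item Then $\pi_{n_0}\circ f$ is $C^*$-extreme in $C(\Omega,\cA_{n_0})_1$. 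This follows from \cite[Theorem~3.6]{HR}, or directly: extend a proper decomposition $\pi_{n_0}\circ f=\sum_{i=1}^k t_i^*a_it_i$ to one of $f$ by using the coefficients $k^{-1/2}\mathbf 1$ and the fixed $\cA'$-component of $f$ in the second summand.
\item The preceding theorem, applied to $\cA_{n_0}$, then finishes the proof.
\end{itemize}
That route is shorter and uses the theorem as a black box. Yours is self-contained and does not need the cited characterization.
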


We next exhibit classes of $C^\ast$-algebras where $C^\ast$-extreme points are linear extreme points.
 \begin{theorem}
     \label{t:c_extimp lin ext}
    Let $\mathcal A$ be a unital $C^*$-algebra admitting a faithful tracial
state $\tau$. Let $x\in \cA_1$.
The following are equivalent.
\begin{enumerate} [(i)]
    \item $x$ is unitary.
    \item $x$ is a $C^*$-extreme point of $\cA_1$.
    \item $x$ is a linear extreme point of $\cA_1$.
\end{enumerate}
 \end{theorem}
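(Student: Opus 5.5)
The plan is to prove (i)$\Rightarrow$(ii)$\Rightarrow$(iii)$\Rightarrow$(i). The first implication is already recorded above: every unitary is a $C^\ast$-extreme point of $\cA_1$ (see \cite[Remark~2.2]{HR}), so only the last two need work.

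For (ii)$\Rightarrow$(iii), suppose $x$ is $C^\ast$-extreme and $x=\frac{y+z}{2}$ with $y,z\in\cA_1$. As in the proof of the first theorem, this is a proper $C^\ast$-convex combination with coefficients $t_1=t_2=\frac{1}{\sqrt2}\mathbf{1}_\cA$. Hence there are unitaries $u,w$ with $y=u^*xu$ and $z=w^*xw$. By the trace property, $\tau(y^*y)=\tau(u^*x^*xu)=\tau(x^*x)$, and likewise $\tau(z^*z)=\tau(x^*x)$. Now use the parallelogram identity
\[
\tfrac14 (y+z)^*(y+z)=\tfrac12\left(y^*y+z^*z\right)-\tfrac14 (y-z)^*(y-z).
\]
Applying $\tau$ gives $\tau(x^*x)=\tau(x^*x)-\frac14\tau\bigl((y-z)^*(y-z)\bigr)$, so $\tau\bigl((y-z)^*(y-z)\bigr)=0$. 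Faithfulness then forces $y=z=x$, so $x$ is a linear extreme point.

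For (iii)$\Rightarrow$(i), I will use Kadison's description of the extreme points of the unit ball of a unital $C^\ast$-algebra. An extreme point $v$ is a partial isometry with $q\cA p=\{0\}$, where $p=\mathbf{1}_\cA-v^*v$ and $q=\mathbf{1}_\cA-vv^*$. Since $\tau(v^*v)=\tau(vv^*)$, we get $\tau(p)=\tau(q)=:t$. If $v$ is not unitary, then $p$ or $q$ is nonzero, and by faithfulness $t>0$, so both are nonzero.

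From $qp=q\mathbf{1}_\cA p=0$ we get $p\le \mathbf{1}_\cA-q=vv^*$ and $q\le v^*v$. The idea is to push $p$ around with $v^*$ and obtain infinitely many mutually orthogonal projections of trace $t$, which contradicts $\tau(\mathbf{1}_\cA)=1$. Set $p_n=(v^*)^n p\,v^n$. Using $p\le vv^*$ and the fact that $v^*$ is isometric on the range of $vv^*$, one checks inductively that each $p_n$ is a projection, that $p_n\le v^*v$ for $n\ge1$, and that $\tau(p_n)=t$. The condition $q\cA p=0$, applied to $x=(v^*)^n$, gives $qp_n=0$ for every $n$. This in turn gives $p_n\le vv^*$, which is what allows the induction to continue.

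The main obstacle is proving that the $p_n$ are pairwise orthogonal. The case $p_1\perp p_0$ follows from $p_1\le v^*v=\mathbf{1}_\cA-p$. For general indices, I would try to show $p_mp_n=0$ for $m<n$ by conjugating with $v^m$: this should reduce the claim to $p\,p_{n-m}=0$, which is again the case just handled. Making that reduction rigorous requires the induction to carry the fact that $v^m$ acts isometrically on the range of each $p_k$ involved, which is where the bookkeeping will be delicate.

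Granting orthogonality, $\sum_{n=0}^{N}p_n\le \mathbf{1}_\cA$ gives $(N+1)t\le 1$ for every $N$, which is impossible since $t>0$. Hence $v$ is unitary, completing the cycle of implications.
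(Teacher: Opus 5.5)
Your proof is correct. For (iii)$\Rightarrow$(i) it takes a genuinely different route from the paper.

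For (ii)$\Rightarrow$(iii) you do what the paper does: unitary invariance of $\tau(a^*a)$ plus strict convexity of the GNS norm. You make the parallelogram step explicit inside $\cA$, whereas the paper leaves it implicit in ``hence $\hat x=\hat y=\hat z$''.

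For (iii)$\Rightarrow$(i) the routes differ:
\begin{enumerate}[(a)]
\item The paper first transfers $x$ to an extreme point of $(\cA^{**})_1$. It pushes this through the normal extension $\widetilde\pi_\tau$ into $\mathcal M=\pi_\tau(\cA)''$, and shows $\mathcal M$ is finite using the extended faithful normal trace. It then imports from \cite{HR} that extreme points of the unit ball of a finite von Neumann algebra are unitaries, and pulls back via injectivity of $\pi_\tau$.
\item You apply Kadison's condition $q\cA p=\{0\}$ directly in $\cA$ and run a trace-counting argument. This avoids the bidual, the weak closure, and the external finiteness result; in effect you reprove the needed finiteness inside $\cA$. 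You use $q\cA p=\{0\}$ only through $q(v^*)^np=0$.
\end{enumerate}
The paper's argument is shorter on the page but leans on more machinery and citations; yours is self-contained.

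The orthogonality step you flagged as delicate is in fact easy, and closing it completes your proof. You have already established that each $p_j$ is a projection with $p_j\le vv^*$. Hence
\[
vp_{j+1}=(vv^*p_j)v=p_jv \qquad (j\ge 0),
\]
and iterating gives $v^mp_n=p_{n-m}v^m$ for $n\ge m$. Therefore, for $m<n$,
\[
p_mp_n=(v^*)^m\,p\,v^m p_n=(v^*)^m\,p\,p_{n-m}\,v^m=0,
\]
because $p_{n-m}\le v^*v=\mathbf{1}_\cA-p$. No bookkeeping about $v^m$ acting isometrically is needed. The bound $(N+1)t\le 1$ then gives the contradiction as you wrote.
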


\begin{proof}  
$(i) \implies (ii)$ follows from \cite[Remark~2.2]{HR}.

$(ii) \implies (iii)$:
    Let $\tau$ be a faithful trace on $\cA$ and $H_\tau$ denote the Hilbert space corresponding to $\tau$ coming from the GNS construction. For $a\in \cA$, let $\Hat{a} \in H_\tau$ denote the corresponding element in $H_\tau$. Observe that the map
  $\psi: \cA \to H_\tau$ given by $\psi(a)= \Hat{a}$
     is a linear map.  The inner product on $H_\tau$ is defined as
    \[
    \left \langle \Hat{a}, \Hat{b} \right \rangle = \tau(ab^*).
    \]
    So, the norm on $H_\tau$ is $\|\Hat{a}\|_\tau=\tau(aa^*)=\tau(a^*a)$.
    Additionally, since $\tau$ is a faithful trace, it follows that the map $\psi$ is injective. 
    % Now, let $x \in \cA_1$ be a $C^*$-extreme point of $\cA_1$. 
    Let 
    \begin{equation}
    \label{eq:conv-comb}
    x= \frac{y+z}{2},
    \end{equation}
    where $y,z \in \cA_1$. We show that $y=z$. Note that Equation \eqref{eq:conv-comb} can also be written as
    \[
    x= \left(\frac{1}{\sqrt{2}}\mathbf{1_\cA}\right)^* y \left(\frac{1}{\sqrt{2}}\mathbf{1_\cA}\right) + \left(\frac{1}{\sqrt{2}}\mathbf{1_\cA}\right)^* z \left(\frac{1}{\sqrt{2}}\mathbf{1_\cA}\right),
    \]
    which denotes a proper $C^*$-convex combination of $x$ in terms of $y$ and $z$. Since $x$ is a $C^*$-extreme point of $\cA_1$, it follows that $y$ and $z$ are unitarily equivalent to $x$. That is, there exist unitaries $u_1, u_2 \in \cA$ such that 
    \[
    y=u_1^*xu_1 \quad \text{and} \quad z= u_2^*xu_2.
    \]
    Therefore, $\tau(y^*y)= \tau(u_1^*x^*xu_1)$ and $\tau(z^*z)= \tau(u_2^*x^*xu_2)$. Thus,
    \[
    \tau(x^*x)= \tau(y^*y)= \tau(z^*z).
    \]
    In other words, $\|\Hat{x}\|_\tau= \|\Hat{y}\|_\tau= \|\Hat{z}\|_\tau$. Hence $\Hat{x}=\Hat{y}= \Hat{z}$. Since $\psi$ is an injective map, it follows that $x=y=z$.
    
$(iii) \implies (i)$:
Let
$(\pi_\tau,H_\tau,\xi_\tau)$
be the GNS representation associated with $\tau$, and put
$\mathcal M=\pi_\tau(\mathcal A)''$.
Since $\tau$ is faithful, $\pi_\tau$ is injective. Indeed, if
$\pi_\tau(a)=0$ for some $a\in\mathcal A$, then
$\tau(a^*a)
=
\|\pi_\tau(a)\xi_\tau\|^2
=
0$,
and the faithfulness of $\tau$ implies that $a=0$.
The tracial state $\tau$ extends to a faithful normal tracial state
$\widetilde{\tau}$ on $\mathcal M$, given by
\[
\widetilde{\tau}(y)
=
\langle y\xi_\tau,\xi_\tau\rangle,
\qquad y\in\mathcal M.
\]
Consequently, $\mathcal M$ is a finite von Neumann algebra. Indeed, if
$v\in\mathcal M$ is an isometry, then $v^*v=\mathbf{1}_H$, and traciality gives
\[
\widetilde{\tau}(vv^*)
=
\widetilde{\tau}(v^*v)
=
\widetilde{\tau}(\mathbf{1}_H)
=
1.
\]
Hence
$\widetilde{\tau}(\mathbf{1}_H-vv^*)=0$.
Since $\widetilde{\tau}$ is faithful, it follows that
$vv^*=\mathbf{1}_H$.
Thus, every isometry in $\mathcal M$ is unitary.
The representation $\pi_\tau$ extends uniquely to a normal surjective
$*$-homomorphism
\[
\widetilde{\pi}_\tau:\mathcal A^{**}\longrightarrow \mathcal M
\]
satisfying
$\widetilde{\pi}_\tau|_{\mathcal A}=\pi_\tau$. Since $x$ is a linear extreme point of $\cA_1$, 
from \cite[Remark~1.5]{HR} it follows that $x$ is a linear extreme point of $(\mathcal A^{**})_1$.
Therefore, 
by \cite[Theorem~10.2]{T}, $x$ is a partial isometry and
\[
(\mathbf{1_\cA}-xx^*)\mathcal A^{**}(\mathbf{1_\cA}-x^*x)=\{0\}.
\]
Let
$v=\pi_\tau(x)=\widetilde{\pi}_\tau(x)\in\mathcal M$.
Since $\widetilde{\pi}_\tau$ is a $*$-homomorphism, $v$ is a partial
isometry. We claim that
\[
(\mathbf{1}_H-vv^*)\mathcal M(\mathbf{1}_H-v^*v)=\{0\}.
\]
Indeed, let $y\in\mathcal M$. Since $\widetilde{\pi}_\tau$ is surjective,
there exists $z\in\mathcal A^{**}$ such that
$\widetilde{\pi}_\tau(z)=y$.
Therefore,
\[
\begin{aligned}
(\mathbf{1}_H-vv^*)y(\mathbf{1}_H-v^*v)
&=
\widetilde{\pi}_\tau(\mathbf{1_\cA}-xx^*)\,
\widetilde{\pi}_\tau(z)\,
\widetilde{\pi}_\tau(\mathbf{1_\cA}-x^*x)\\
&=
\widetilde{\pi}_\tau
\bigl((\mathbf{1_\cA}-xx^*)z(\mathbf{1_\cA}-x^*x)\bigr)\\
&=0.
\end{aligned}
\]
Hence
$(\mathbf{1}_H-vv^*)\mathcal M(\mathbf{1}_H-v^*v)=\{0\}$.
By \cite[Theorem~10.2]{T}, $v$ is a linear extreme point of
$\mathcal M_1$.
Since $\mathcal M$ is a finite von Neumann algebra, it follows from \cite[Theorem~1.2, Theorem~3.12]{HR} that every linear extreme
point of $\mathcal M_1$ is unitary. Therefore,
$v^*v=vv^*=\mathbf{1}_H$.
Thus
\[
\pi_\tau(x^*x)
=
\pi_\tau(x)^*\pi_\tau(x)
=
\mathbf{1}_H
=
\pi_\tau(\mathbf{1_\cA}),
\]
and hence
$\pi_\tau(\mathbf{1_\cA}-x^*x)=0$.
Since $\pi_\tau$ is injective on $\mathcal A$, it follows that
$x^*x=\mathbf{1_\cA}$.
Similarly,
$xx^*=\mathbf{1_\cA}$.
Therefore, $x$ is unitary in $\mathcal A$.  This completes the proof.
% Finally, from \cite[Remark~2.2]{HR} 
% $x\in C^*\text{-}\operatorname{ext}(\mathcal A_1)$.
% Let
% $(\pi_\tau,H_\tau,\xi_\tau)$
% be the GNS representation associated with $\tau$.
% Since $\tau$ is faithful, $\pi_\tau$ is injective. Indeed, if
% $\pi_\tau(a)=0$ for some $a\in\mathcal A$, then
% $\tau(a^*a)
% =
% \|\pi_\tau(a)\xi_\tau\|^2
% =
% 0$,
% and the faithfulness of $\tau$ implies that $a=0$.
% \noindent
\end{proof}

The next corollary deals with the vector-valued case of Theorem \ref{t:c_extimp lin ext}.
\begin{corollary}
    \label{c:ptwise C_ext in faithful C_alg}
    Let $\cA$ be a unital $C^*$-algebra such that $\cA$ has a faithful tracial state $\tau$. Let $f\in C^*\text{-}\operatorname{ext}\bigl(C(\Omega,\cA)\bigr)_1$. Then $f$ is a unitary.
\end{corollary}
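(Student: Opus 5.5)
The plan is to reduce the corollary to Theorem~\ref{t:c_extimp lin ext}, applied to the algebra $\mathcal B=C(\Omega,\cA)$ itself. That theorem is stated for an arbitrary unital $C^*$-algebra with a faithful tracial state, so it suffices to construct such a state on $\mathcal B$. Once that is done, the implication $(ii)\Rightarrow(i)$ of the theorem, applied in $\mathcal B$, gives directly that every $C^*$-extreme point $f$ of $\mathcal B_1$ is a unitary.

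To build the trace, I would fix a positive Radon measure $\mu$ on $\Omega$ and define
\[
\widetilde{\tau}(g)=\int_\Omega \tau\bigl(g(\omega)\bigr)\,d\mu(\omega),\qquad g\in C(\Omega,\cA).
\]
The integrand $\omega\mapsto\tau(g(\omega))$ is continuous, so $\widetilde\tau$ is well defined. The required properties follow pointwise:
\begin{itemize}
\item Linearity and positivity are inherited from $\tau$.
\item $\widetilde\tau(\mathbf 1)=\mu(\Omega)=1$ once $\mu$ is a probability measure.
\item Traciality holds because $\tau(g(\omega)h(\omega))=\tau(h(\omega)g(\omega))$ for every $\omega$.
\end{itemize}
For faithfulness, suppose $\widetilde\tau(g^*g)=0$. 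The function $\omega\mapsto\tau(g(\omega)^*g(\omega))$ is continuous and nonnegative with integral zero, so it vanishes on the support of $\mu$. Faithfulness of $\tau$ then gives $g(\omega)=0$ on $\operatorname{supp}\mu$. Hence $g=0$ provided $\operatorname{supp}\mu=\Omega$.

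The main obstacle is therefore producing a probability measure of full support on $\Omega$. Such a measure need not exist for a general compact Hausdorff space; it exists, for instance, when $\Omega$ is metrizable, by taking $\sum 2^{-n}\delta_{\omega_n}$ over a countable dense set. I would avoid this issue with a localized argument that uses Theorem~\ref{t:c_extimp lin ext} only through its proof idea, namely the trace-norm comparison.

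The localized argument runs as follows. Let $f$ be $C^*$-extreme. For any decomposition $f=\tfrac12(g+h)$ with $g,h\in C(\Omega,\cA)_1$, $C^*$-extremality gives unitaries $u_1,u_2\in C(\Omega,\cA)$ with $g=u_1^*fu_1$ and $h=u_2^*fu_2$. Evaluating at each $\omega$ and using the strict convexity of the GNS Hilbert space norm for $\tau$, exactly as in the proof of $(ii)\Rightarrow(iii)$, yields $g(\omega)=h(\omega)=f(\omega)$ for every $\omega$. This argument needs no measure at all. So $f$ is a linear extreme point of $C(\Omega,\cA)_1$.

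It then remains to pass from $f$ being linear extreme to each $f(\omega)$ being linear extreme in $\cA_1$. Here I would argue by contradiction, modelled on the Urysohn argument of the first theorem. Suppose $f(\omega_0)$ is not a unitary. Then $f(\omega_0)$ is not an extreme point of $\cA_1$, by $(iii)\Rightarrow(i)$ of Theorem~\ref{t:c_extimp lin ext}. I would then perturb $f$ near $\omega_0$ by $\pm\varepsilon\varphi(\omega)d(\omega)$, where $\varphi$ is an Urysohn bump and $d(\omega)$ is a continuously varying element such that $f(\omega)\pm d(\omega)\in\cA_1$. A natural candidate is a Kadison-type element built from $(\mathbf 1-f f^*)^{1/2}$ and $(\mathbf 1-f^*f)^{1/2}$ via continuous functional calculus. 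Exhibiting this continuous choice of $d$ is the delicate step, and I expect it to be the main obstacle of the whole proof.

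Once every $f(\omega)$ is shown to be unitary, $f$ is a unitary in $C(\Omega,\cA)$, since unitarity in $C(\Omega,\cA)$ is checked pointwise as noted at the start of this section.
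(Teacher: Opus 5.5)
\textbf{Verdict: genuine gap.} The key step, passing from extremality of $f$ to extremality of each $f(\omega)$, is never carried out. Your first step, showing that $f$ is a linear extreme point of $C(\Omega,\cA)_1$ by applying the trace-norm argument pointwise, is exactly the paper's. You are also right that the global trace $\int_\Omega\tau(g(\omega))\,d\mu(\omega)$ is not available for general $\Omega$.

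The next step carries the real content of the corollary, and you do not prove it: you never show that linear extremality of $f$ forces each $f(\omega)$ to be extreme in $\cA_1$. You only announce a perturbation $f\pm\varepsilon\varphi d$ and leave the construction of $d$ as ``the main obstacle''. This is not a routine detail. A fixed witness $y_0$ of non-extremality at $\omega_0$, multiplied by an Urysohn bump, does not work in general. For example, in $M_2$ the elements $E_{11}\pm E_{22}$ are contractions. But for any rank-one projection $P\neq E_{11}$, with unit vector $\xi=(\alpha,\beta)$ spanning its range, one has $\langle (P+\varepsilon E_{22})\xi,\xi\rangle=1+\varepsilon|\beta|^2>1$. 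So if $f(\omega)$ is a rank-one projection rotating through $E_{11}$ at $\omega_0$, then $f+\varepsilon\varphi E_{22}$ leaves the unit ball for every bump $\varphi$ and every $\varepsilon>0$. Hence $d$ must be adapted to $f(\omega)$ at every point, and you must also ensure $d(\omega_0)\neq0$. Neither is done.

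\textbf{How the paper closes the step.} It argues by citation. Since $C(\Omega,\cA)$ is a unital $C^*$-algebra, \cite[Theorem~B]{HR} makes the extreme point $f$ strongly extreme. Then \cite[Proposition~7]{DHS} makes every $f(\omega)$ strongly extreme, hence extreme, in $\cA_1$, and $(iii)\Rightarrow(i)$ of Theorem~\ref{t:c_extimp lin ext} finishes.

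\textbf{How to complete your own route.} Your candidate does work, and it gives a self-contained alternative.
\begin{enumerate}[(1)]
\item Fix $a\in\cA_1$ and set $d(\omega)=\tfrac12(\mathbf 1-f(\omega)f(\omega)^*)^{1/2}\,a\,(\mathbf 1-f(\omega)^*f(\omega))^{1/2}$. This is continuous by continuity of the functional calculus.
\item For $x\in\cA_1$, represented faithfully with $x=v|x|$, one has $|\langle xu,w\rangle|\le\langle|x|u,u\rangle^{1/2}\langle|x^*|w,w\rangle^{1/2}$.
\item Cauchy--Schwarz together with $t+\tfrac12(1-t^2)\le1$ on $[0,1]$ gives $\|x\pm\tfrac12(\mathbf 1-xx^*)^{1/2}a(\mathbf 1-x^*x)^{1/2}\|\le1$. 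The factor $\tfrac12$ is necessary, since for scalars $t+(1-t^2)>1$ on $(0,1)$.
\item Hence $f\pm d\in C(\Omega,\cA)_1$, and linear extremality of $f$ forces $d\equiv0$. No bump function is needed.
\item Take $a=f(\omega_0)$ and use $(\mathbf 1-xx^*)^{1/2}x=x(\mathbf 1-x^*x)^{1/2}$. This gives $x(\mathbf 1-x^*x)=0$ for $x=f(\omega_0)$, so $x$ is a partial isometry.
\item Then $(\mathbf 1-xx^*)\cA(\mathbf 1-x^*x)=0$, so $x$ is an extreme point by Kadison's criterion \cite[Theorem~10.2]{T}.
\end{enumerate}
This avoids \cite{HR} and \cite{DHS} altogether, at the cost of one explicit norm estimate.
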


\begin{proof}
    First, we show that $f$ is a linear extreme point of $C(\Omega, \cA)_1$. For that, let 
    \[
    f=\frac{g_1+g_2}{2},
    \]
    where $g_1, g_2 \in C(\Omega, \cA)_1$. We have,
    \[
    f=\left(\frac{1}{\sqrt{2}}\mathbf{1}\right)^* g_1 \left(\frac{1}{\sqrt{2}}\mathbf{1}\right) + \left(\frac{1}{\sqrt{2}}\mathbf{1}\right)^* g_2 \left(\frac{1}{\sqrt{2}}\mathbf{1}\right),
    \]
    which is a proper $C^*$-combination of $f$ in terms of $g_1$ and $g_2$. Since $f$ is a $C^*$-extreme point of $C(\Omega, \cA)_1$, it follows that $g_1$ and $g_2$ are unitarily equivalent to $f$. That is, there exist unitaries $u_1$ and $u_2$ in $C(\Omega, \cA)$ such that $g_i=u_i^*fu_i$ for $i=1,2$. Therefore, for each $\omega \in \Omega$ and $i=1,2$, we have $g_i(\omega)=u_i(\omega)^*f(\omega)u_i(\omega)$. Since $\cA$ has a faithful tracial state $\tau$,  by applying the same procedure as in the case $(ii) \implies (iii)$ in the proof of Theorem \ref{t:c_extimp lin ext}, one obtains for each $\omega \in \Omega$
    \[
    g_1(\omega)= f(\omega)= g_2(\omega).
    \]
    In other words, $g_1=f=g_2$. Hence $f$ is a linear extreme point of $C(\Omega, \cA)_1$. 
    Now, using \cite[Theorem~B]{HR}, it follows that $f$ is a strongly extreme point of $C(\Omega, \cA)_1$. By appealing \cite{DHS}, we have $f(\omega)$ is a strongly extreme point of $\cA_1$ for all $\omega \in \Omega$. By Theorem \ref{t:c_extimp lin ext}, $f(\omega)$ is a unitary for all $\omega \in \Omega$. Hence $f$ is a unitary.
\end{proof}
 
Recall that the space $M_n$ is a Hilbert space with respect to the Hilbert-Schmidt norm, which is given by $\|A\|_2= \sqrt{\trace(A^*A)}$ for $A\in M_n$. Using the property that for any $A, B\in M_n$, we have $\trace(AB)= \trace(BA)$, one can see that $\|A\|_2= \|U^*AU\|_2$, where $U\in M_n$ is a unitary matrix. We use this observation in the following theorem.

\begin{remark}
\label{rem:unitary in direct sum}
We note the coordinate-wise computation in an $\ell^\infty$-direct sum for future use. 
Let
\[
\mathcal A
=
\bigoplus_{k=1}^{\infty} M_k
=
\left\{
(A_k)_{k=1}^{\infty} :
A_k\in M_k
\text{ for all } k,\ 
\sup_{k\ge 1}\|A_k\|<\infty
\right\}.
\]
Let \(A=(A_k), B=(B_k)\in\mathcal A\), where \(A_k,B_k\in M_k\).
Suppose that
$B=U^*AU$
for some unitary \(U=(U_k)\in\mathcal A\). Since multiplication and
adjoints in \(\mathcal A\) are defined coordinatewise, the identities
$U^*U=UU^*=\mathbf{1}_\cA$
imply that
$U_k^*U_k=U_kU_k^*=\mathbf{1}_k$
for every \(k\), where $\mathbf{1}_k$ denotes the identity of $M_k$. Thus, each \(U_k\) is a unitary in \(M_k\). Moreover,
from \(B=U^*AU\), we obtain
$B_k=U_k^*A_kU_k$, for all $k\ge 1$.
Therefore, \(B_k\) is unitarily equivalent to \(A_k\) for every \(k\). 

Similar computations also work for partial isometries.
\end{remark}

 We give an independent proof in the case of matrices.
 \begin{theorem}
     \label{t:Mn case}
     Let $\cA= \oplus_{k=1}^\infty M_k$.
      If $f \in C(\Omega, \cA)_1$ is a $C^*$-extreme point of $C(\Omega, \cA)_1$ then $f$ is a linear extreme point of $C(\Omega, \cA)_1$.
 \end{theorem}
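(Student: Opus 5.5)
The plan is to mimic the argument in the implication $(ii)\implies(iii)$ of Theorem \ref{t:c_extimp lin ext}. Instead of a faithful tracial state, I would use the coordinatewise Hilbert--Schmidt norms, which are invariant under unitary conjugation. The main point to check is that strict convexity of the Hilbert--Schmidt norm still gives equality, even though there is no single global trace.

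First, suppose $f=\frac{g_1+g_2}{2}$ with $g_1,g_2\in C(\Omega,\cA)_1$. As in the proof of Corollary \ref{c:ptwise C_ext in faithful C_alg}, I would rewrite this as the proper $C^*$-convex combination
\[
f=\left(\tfrac{1}{\sqrt2}\mathbf 1\right)^*g_1\left(\tfrac{1}{\sqrt2}\mathbf 1\right)+\left(\tfrac{1}{\sqrt2}\mathbf 1\right)^*g_2\left(\tfrac{1}{\sqrt2}\mathbf 1\right).
\]
Since $f$ is $C^*$-extreme, there are unitaries $u_1,u_2\in C(\Omega,\cA)$ with $g_i=u_i^*fu_i$. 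Evaluating at $\omega\in\Omega$ gives $g_i(\omega)=u_i(\omega)^*f(\omega)u_i(\omega)$, where $u_i(\omega)$ is a unitary of $\cA$. By Remark \ref{rem:unitary in direct sum}, for every $k$ we get
\[
g_i(\omega)(k)=u_i(\omega)(k)^*\,f(\omega)(k)\,u_i(\omega)(k),
\]
with $u_i(\omega)(k)$ unitary in $M_k$.

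Next, fix $\omega$ and $k$, and write $A=f(\omega)(k)$ and $B_i=g_i(\omega)(k)$. By the unitary invariance of the Hilbert--Schmidt norm noted before Remark \ref{rem:unitary in direct sum}, we have $\|B_1\|_2=\|B_2\|_2=\|A\|_2$. We also have $A=\frac{B_1+B_2}{2}$ in the Hilbert space $(M_k,\|\cdot\|_2)$. Hilbert spaces are strictly convex; explicitly, the parallelogram law gives
\[
\left\|\tfrac{B_1-B_2}{2}\right\|_2^2=\tfrac12\|B_1\|_2^2+\tfrac12\|B_2\|_2^2-\|A\|_2^2=0.
\]
Hence $B_1=B_2=A$.

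Finally, since this holds for every $k$ and every $\omega$, we get $g_1(\omega)=g_2(\omega)=f(\omega)$ for all $\omega$, that is, $g_1=g_2=f$. So $f$ is a linear extreme point.

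I do not expect a real obstacle here. The only step needing care is the reduction from unitary equivalence in $C(\Omega,\cA)$ to unitary equivalence in each block $M_k$, and Remark \ref{rem:unitary in direct sum} supplies exactly that. Notably, there is no need for a global trace on $\cA$; indeed $\cA$ carries no faithful normalized trace summable over all blocks that would obviously work without weights.
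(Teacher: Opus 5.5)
Your proposal is correct and follows essentially the same route as the paper's proof: the proper $C^*$-convex decomposition with coefficients $\tfrac{1}{\sqrt2}\mathbf 1$, the reduction to blockwise unitary equivalence via Remark~\ref{rem:unitary in direct sum}, the unitary invariance of $\|\cdot\|_2$, and the parallelogram law (which you state correctly, whereas the paper's display drops a factor of $2$). Your closing remark is overly cautious, though: the weighted functional $\tau\bigl((A_k)\bigr)=\sum_k 2^{-k}\trace(A_k)/k$ is a faithful tracial state on $\cA$, so the statement also follows directly from Corollary~\ref{c:ptwise C_ext in faithful C_alg}.
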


 \begin{proof} We follow the procedure outlined during the proof of Corollary \ref{c:ptwise C_ext in faithful C_alg}.
     Let 
     \[
     f=\frac{g_1+g_2}{2},
     \]
      where $g_1, g_2 \in C(\Omega, \cA)_1$.
     Now
     \[
     f=\left(\frac{1}{\sqrt{2}}\mathbf{1}\right)^* g_1 \left(\frac{1}{\sqrt{2}}\mathbf{1}\right) + \left(\frac{1}{\sqrt{2}}\mathbf{1}\right)^* g_2 \left(\frac{1}{\sqrt{2}}\mathbf{1}\right).
     \]
     % where $\mathbf{1}$ denote the identity element of $C(\Omega, M_n)_1$. 
     Since $f$ is a $C^*$-extreme point of $C(\Omega, M_n)_1$, there exist unitaries $u_1, u_2 \in C(\Omega, M_n)$ such that 
     \[
     g_1=u_1^*fu_1 \quad g_2=u_2^*fu_2.
      \]
      Therefore for each $i=1,2$, one has
      \[
      g_i(\omega)= u_i(\omega)^*f(\omega)u_i(\omega) \quad \text{for all}~~ \omega \in \Omega.
      \]
   Let $k\in \N$ and  $\omega \in \Omega$ be arbitrary fixed.  Now using Remark \ref{rem:unitary in direct sum}, we have 
      \[
      g_i(\omega)_k= u_i(\omega)_k^*f(\omega)_ku_i(\omega)_k.
      \]
      In particular, for each $i=1,2$, one obtains $\|g_i(\omega)_k\|_2= \|f(\omega)_k\|_2$.
      Now, applying the parallelogram law, we get
      \begin{align*}
\|g_1(\omega)_k-g_2(\omega)_k\|_2^2=& 2\|g_1(\omega)_k\|_2^2+ \|g_2(\omega)_k\|_2^2-\|g_1(\omega)_k+g_2(\omega)_k\|_2^2\\
 =& 2 \|f(\omega)_k\|_2^2+ 2 \|f(\omega)_k\|_2^2 -\|2 f(\omega)_k\|_2^2\\
 =&0.
     \end{align*}
     Thus, we have  $g_1(\omega)_k=g_2(\omega)_k$. Since $k \in \N$ and $\omega \in \Omega$ were arbitrarily fixed, it follows that $g_1=g_2$.
     This completes the proof.
 \end{proof}

 \begin{corollary}
     \label{c:C_ext-iff-ptwise}
     Let $\cA= \oplus_{k=1}^\infty M_k$ and let $f\in C(\Omega, \cA)_1$ be a $C^\ast$-extreme point. Then $f$ is a unitary,
 \end{corollary}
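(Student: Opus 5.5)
By Theorem \ref{t:Mn case}, $f$ is already a linear extreme point of $C(\Omega,\cA)_1$. So the plan is to follow the second half of the proof of Corollary \ref{c:ptwise C_ext in faithful C_alg}: pass from linear extremality to strong extremality, then to pointwise information, and finally to unitarity. The algebra $\cA=\oplus_{k=1}^\infty M_k$ is a von Neumann algebra (an $\ell^\infty$-sum of matrix algebras). Hence \cite[Theorem~B]{HR} applies as in the proof of Corollary \ref{c:ptwise C_ext in faithful C_alg}, and $f$ is a strongly extreme point of $C(\Omega,\cA)_1$. By \cite[Proposition~7]{DHS}, $f(\omega)$ is then a strongly extreme point, in particular a linear extreme point, of $\cA_1$ for every $\omega\in\Omega$.

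It remains to show that every linear extreme point $x=(x_k)$ of $\cA_1$ is a unitary. First, each coordinate $x_k$ is a linear extreme point of $(M_k)_1$. Indeed, if $x_k=\frac{y+z}{2}$ with $y,z\in (M_k)_1$, replace the $k$-th coordinate of $x$ by $y$ and by $z$ respectively. This gives $x$ as the midpoint of two elements of $\cA_1$, since the norm on $\cA$ is the supremum of the coordinate norms, so $y=z$. Next, by \cite[Theorem~10.2]{T}, $x_k$ is a partial isometry with $(\mathbf 1_k-x_kx_k^*)M_k(\mathbf 1_k-x_k^*x_k)=\{0\}$. This forces $x_k^*x_k=\mathbf 1_k$ or $x_kx_k^*=\mathbf 1_k$. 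In $M_k$ an isometry is unitary, by a dimension count or equivalently by the trace argument from $(iii)\Rightarrow(i)$ of Theorem \ref{t:c_extimp lin ext}. Hence each $x_k$ is unitary. Alternatively, one can observe that $\cA$ is a finite von Neumann algebra and quote \cite[Theorem~1.2, Theorem~3.12]{HR} directly, as was done in Theorem \ref{t:c_extimp lin ext}. Either way, $f(\omega)_k$ is unitary for all $k$ and $\omega$. By the coordinatewise description in Remark \ref{rem:unitary in direct sum}, $f(\omega)$ is unitary in $\cA$ for every $\omega$, and hence $f$ is a unitary in $C(\Omega,\cA)$, since unitarity is checked pointwise.

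The main obstacle is the use of \cite[Theorem~B]{HR}, that is, making sure the passage from linear extreme to strongly extreme in $C(\Omega,\cA)_1$ is available for this $\cA$. If it is not directly available, I would bypass strong extremality and argue pointwise by hand, via a contrapositive. Suppose some $f(\omega_0)_k$ is not unitary. Then $\mathbf 1_k-f(\omega_0)_k^*f(\omega_0)_k\neq 0$, and I would use a local perturbation of $f$ near $\omega_0$ in the $k$-th coordinate, in the spirit of the Urysohn argument of the first theorem of this section, to contradict linear extremality.

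A cleaner way to organise the whole argument is to note that $\cA$ has a faithful tracial state, namely $\tau((A_k))=\sum_k 2^{-k}\frac{1}{k}\trace(A_k)$. With this, the statement becomes an immediate instance of Corollary \ref{c:ptwise C_ext in faithful C_alg}. I would present this as the main proof, and keep the coordinatewise argument above as the independent matrix proof that the paper's approach suggests.
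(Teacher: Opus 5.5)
Your proposal is correct. Its coordinatewise part is the paper's own proof: Theorem~\ref{t:Mn case}, then \cite[Theorem~B]{HR} for strong extremality in $C(\Omega,\cA)_1$, then \cite{DHS} to pass to each $f(\omega)$, then the fact that extreme points of $(M_k)_1$ are unitaries. You write out the details the paper calls ``easy to see''. One small correction: \cite[Theorem~B]{HR} is applied in $C(\Omega,\cA)$, so what justifies it is that $C(\Omega,\cA)$ is a unital $C^*$-algebra, as the paper says. That $\cA$ is a von Neumann algebra is beside the point, since $C(\Omega,\cA)$ is in general not one.

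The route you would actually present is genuinely different and shorter. Your $\tau((A_k))=\sum_k 2^{-k}k^{-1}\trace(A_k)$ is indeed a faithful tracial state on $\cA$, so the statement is a direct instance of Corollary~\ref{c:ptwise C_ext in faithful C_alg}. Theorem~\ref{t:Mn case} is likewise covered by the first half of that corollary's proof; the Hilbert--Schmidt computation is just the coordinatewise form of the $H_\tau$ computation.

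Your route buys economy, but it imports $(iii)\Rightarrow(i)$ of Theorem~\ref{t:c_extimp lin ext}, with its GNS representation, bidual, normal extension and finiteness of $\pi_\tau(\cA)''$. The paper's matrix route avoids all of that. It needs only Hilbert--Schmidt norms and the elementary description of extreme points of $(M_k)_1$, which is why it is offered as an independent proof. Both routes still go through \cite[Theorem~B]{HR} and \cite{DHS}.

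The obstacle you flag can in fact be removed altogether. Apply Kadison's characterization \cite[Theorem~10.2]{T} directly to the unital $C^*$-algebra $C(\Omega,\cA)$. An extreme point $f$ is then a partial isometry with $(\mathbf 1-ff^*)g(\mathbf 1-f^*f)=0$ for all $g\in C(\Omega,\cA)$. Taking $g$ constant and evaluating at $\omega$ shows that every $f(\omega)$ is an extreme point of $\cA_1$, with no appeal to strong extremality.
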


 \begin{proof}
     Since $f$ is a $C^*$-extreme point of $C(\Omega, \cA)_1$  by Theorem \ref{t:Mn case}, we have $f$ is a linear extreme point of $C(\Omega, \cA)_1$. Using again the fact that $C(\Omega, \cA)$ is a unital $C^*$-algebra and applying \cite[Theorem~B]{HR}, it follows that $f$ is a strongly extreme point of $C(\Omega, \cA)_1$. Now from \cite{DHS}, it follows that $f(\omega)$ is a strongly extreme point of $\cA$ for all $\omega \in \Omega$.  It is easy to see that components of $f(\omega)$ are extreme points of the corresponding $M_k$'s, and hence are unitaries, so that $f(w)$ is a unitary for all $\omega \in \Omega$. Therefore, $f$ is unitary.
 \end{proof}
We next explore in the vector-valued case the relationship between $C^*$-extreme and pointwise partial isometries.

 \begin{lemma}
 \label{lem:pointwise-partial-isometry}
Let $\Omega$ be a compact Hausdorff space and let $H$ be a separable
Hilbert space. Suppose that
$f\in C^*\text{-}\operatorname{ext}\bigl(C(\Omega,B(H))_1\bigr)$.
Then $f(\omega)$ is a partial isometry for every $\omega\in\Omega$.
\end{lemma}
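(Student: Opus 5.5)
Suppose, to the contrary, that there is $\omega_0\in\Omega$ such that $f(\omega_0)$ is not a partial isometry. The plan is to perturb $f$ near $\omega_0$ by a small self-adjoint term built from continuous functional calculus of $|f|$, write $f$ as a midpoint of two elements of the unit ball, and use $C^*$-extremality to force a unitary equivalence that changes the spectrum of $f(\omega_0)^*f(\omega_0)$. This mirrors the first theorem of this section.

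Since $f(\omega_0)$ is a contraction but not a partial isometry, the positive operator $f(\omega_0)^*f(\omega_0)$ has a spectral value $\lambda_0\in(0,1)$. Indeed, a contraction $x$ is a partial isometry exactly when $\sigma(x^*x)\subseteq\{0,1\}$.

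Choose a continuous $h:[0,1]\to[0,\infty)$ supported in a small interval $[a,b]\subset(0,1)$ around $\lambda_0$, with $h(\lambda_0)>0$, and set $\phi(t)=\varepsilon\, h(t^2)\,t$. Consider
\[
g_\pm=f\pm f\,h(f^*f)\varepsilon=f\bigl(\mathbf{1}\pm\varepsilon h(f^*f)\bigr).
\]
Functional calculus is applied pointwise in $C(\Omega,B(H))$, so $g_\pm\in C(\Omega,B(H))$. For small $\varepsilon$ we have $g_\pm^*g_\pm=f^*f\,(\mathbf{1}\pm\varepsilon h(f^*f))^2$, which at each $\omega$ is a continuous function $t(1\pm\varepsilon h(t))^2$ of $f(\omega)^*f(\omega)$. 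On the support of $h$, $t\le b<1$, so choosing $\varepsilon$ with $b(1+\varepsilon\|h\|_\infty)^2\le 1$ gives $\|g_\pm\|\le1$. Also $f=\tfrac12(g_++g_-)$, which is a proper $C^*$-convex combination with coefficients $\tfrac1{\sqrt2}\mathbf{1}$. Hence $g_+=u^*fu$ for a unitary $u$, and evaluating at $\omega_0$ gives
\[
g_+(\omega_0)^*g_+(\omega_0)=u(\omega_0)^*f(\omega_0)^*f(\omega_0)u(\omega_0).
\]
So $\sigma(F(T))=\sigma(T)$, where $T=f(\omega_0)^*f(\omega_0)$ and $F(t)=t(1+\varepsilon h(t))^2$.

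The main obstacle is deriving a contradiction from $\sigma(F(T))=\sigma(T)$, since $F$ is only a strict increase on $\operatorname{supp}h$. By the spectral mapping theorem, $F(\sigma(T))=\sigma(T)$. The map $F$ is strictly increasing on $[0,1]$ for small $\varepsilon$ (take $h$ Lipschitz), with $F(t)\ge t$ and $F(t)>t$ wherever $h(t)>0$.

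Take $h>0$ exactly on $(a,b)$, with $a<\lambda_0<b$ and $b\notin\sigma(T)$; if $\sigma(T)\cap(0,1)$ is all of $(0,1)$, pick any $b<1$ instead. Let $s=\sup(\sigma(T)\cap[0,b])$, which lies in $\sigma(T)$ by compactness.

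If $s<b$, then $F(s)\le b$ is forced by $F(\sigma(T))\subseteq\sigma(T)$ together with monotonicity and $F(b)=b$. So $F(s)\in\sigma(T)\cap[0,b]$, giving $F(s)\le s$, and hence $h(s)=0$. This means $s\le a$, contradicting $\lambda_0\le s$.

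If instead $s=b$, take $\mu=\max\bigl(\sigma(T)\cap[\lambda_0,(\lambda_0+b)/2]\bigr)$. Its image $F(\mu)>\mu$ must lie in $\sigma(T)\cap(\mu,b)$, and iterating $F$ produces an increasing orbit in $\sigma(T)$ that converges to a fixed point of $F$, which must be $b$. This does not immediately give a contradiction. Rather than pursue it, I would first try to avoid this case entirely by an order argument: compare $\tau$-free invariants such as $\dim$ of spectral projections $E_T([\lambda_0,1))$ versus $E_{F(T)}$. Since $F(T)\ge T$ with $F(T)\ne T$ and the two are unitarily equivalent, the same restricted to the spectral subspace for $(0,b)$ should give a contradiction. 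Separability of $H$ would enter here to make the multiplicity comparison work.
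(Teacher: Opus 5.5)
There is a genuine gap, at exactly the step you call the main obstacle. Your argument in the case $b\notin\sigma(T)$ works. When no such $b\in(\lambda_0,1)$ exists, nothing is proved; this happens for $\sigma(T)=[0,1]$, and also for $\sigma(T)=\{0\}\cup[\tfrac12,1]$.

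The order/multiplicity fix you suggest rests on a false principle. In infinite dimensions one can have $A\ge B$, $A\neq B$ and $A$ unitarily equivalent to $B$ all at once. In fact no argument built on your decomposition can succeed. Take $\Omega$ a single point, $H=L^2[0,1]$, $T=M_t$ (multiplication by $t$) and $f=T^{1/2}$. For $h$ Lipschitz (as you propose) and $\varepsilon$ small, $F_\pm(t)=t(1\pm\varepsilon h(t))^2$ are increasing bi-Lipschitz homeomorphisms of $[0,1]$. The formula $(V_\pm\xi)(t)=\sqrt{F_\pm'(t)}\,\xi(F_\pm(t))$ defines a unitary with $V_\pm T V_\pm^*=F_\pm(T)$. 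Hence
$g_\pm=T^{1/2}(\mathbf{1}\pm\varepsilon h(T))=F_\pm(T)^{1/2}=V_\pm f V_\pm^*$.
So both halves of your decomposition really are unitarily equivalent to $f$, and it cannot witness that $f$ fails to be $C^*$-extreme. The underlying reason is that an injective increasing $F$ only relabels the spectrum. The eigenvalues of $F(T)$ correspond bijectively to those of $T$, so $F(T)$ acquires no unitary invariant that $T$ lacks.

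The missing idea, which is what the paper uses, is a perturbation that is \emph{not} small and that makes the new modulus non-injective.
\begin{itemize}
\item Choose an interval $U=(a,b)\ni\lambda_0$ with $\overline U\subset(0,1)$ such that $|\sqrt{c/s}-1|<s^{-1/2}-1$ for all $s,c\in U$.
\item Choose a compact $J\subset U$ with $E_T(J)\neq 0$.
\item Using separability, choose $c\in U$ that is not an eigenvalue of $T$; this is possible because a self-adjoint operator on a separable space has only countably many eigenvalues.
\end{itemize}
Take $\phi\in C([0,1],[0,1])$ with $\phi|_J=1$ and $\phi=0$ off $U$, and set $r(s)=\phi(s)(\sqrt{c/s}-1)$. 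The functions $g_\pm=f(\mathbf{1}\pm r(f^*f))$ still lie in the unit ball and average to $f$. However, $\psi(s)=s(1+r(s))^2$ is identically $c$ on $J$, so $\psi(T)E_T(J)=cE_T(J)$. Thus $c$ is an eigenvalue of $g_+(\omega_0)^*g_+(\omega_0)$ but not of $T$, which contradicts unitary equivalence. This is also the precise point where separability enters; your proposal never pins down its role.
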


\begin{proof}
Fix $\omega_0\in\Omega$ and put
$x=f(\omega_0)$, $T=x^*x$.
Since $\|x\|\leq1$, we have
$0\leq T\leq \mathbf{1}_H$.
Suppose, to the contrary, that $T$ is not a projection. Then
\[
\sigma(T)\cap(0,1)\neq\varnothing.
\]
Indeed, if $\sigma(T)\subseteq\{0,1\}$, then the continuous functional
calculus would give $T^2=T$.
Choose
$\lambda_0\in \sigma(T)\cap(0,1)$,
and let $E_T$ denote the spectral measure of $T$. Since $\lambda_0$
belongs to the spectrum of $T$, one has
$E_T(V)\neq0$
for every open neighbourhood $V$ of $\lambda_0$.
Consider the continuous function $F: (0,1) \times (0,1) \to \R$ defined as
\[
F(s,c)
=
\frac{1}{\sqrt{s}}-1
-
\left|\sqrt{\frac{c}{s}}-1\right|.
% \qquad (s,c)\in(0,1)\times(0,1).
\]
Since
$F(\lambda_0,\lambda_0)
=
\frac{1}{\sqrt{\lambda_0}}-1>0$,
there exists an open interval $U=(a,b)$ containing $\lambda_0$, with
$\overline U\subset(0,1)$,
such that for $s,c \in U$
\begin{equation}
\label{eq:r-bound}
\left|\sqrt{\frac{c}{s}}-1\right|
<
\frac{1}{\sqrt{s}}-1.
%\qquad (s,c\in U).
\end{equation}
Since $E_T(U)\neq0$, choose a compact interval $J\subset U$ such that
$E_T(J)\neq0$.
% For instance, writing
% \[
% J_n=
% \left[
% a+\frac{b-a}{n+2},
% b-\frac{b-a}{n+2}
% \right],
% \]
% we have $J_n\uparrow U$, and hence
% \[
% E_T(J_n)\xrightarrow{\mathrm{SOT}}E_T(U).
% \]
% Therefore $E_T(J_n)\neq0$ for some $n$.
Now, $T$ is self-adjoint, and $H$ is separable; $T$ has at most
countably many distinct eigenvalues. Since $U$ is uncountable, we may
choose
$c\in U$
which is not an eigenvalue of $T$.
Choose $\phi\in C([0,1],[0,1])$ such that
\[
\phi|_J=1
\qquad\text{and}\qquad
\phi=0\quad\text{on }[0,1]\setminus U.
\]
Define
\[
r(s)=
\begin{cases}
\displaystyle
\phi(s)\left(\sqrt{\frac{c}{s}}-1\right),
&s\in U,\\[1.2ex]
0,&s\notin U.
\end{cases}
\]
Then $r\in C([0,1],\mathbb R)$ and, by \eqref{eq:r-bound},
\[
|r(s)|
\leq
\frac{1}{\sqrt{s}}-1
\qquad(0<s\leq1).
\]
Consequently,
$|1\pm r(s)|
\leq
\frac{1}{\sqrt{s}}$,
and hence
\begin{equation}\label{eq:scalar-contraction-lemma}
s(1\pm r(s))^2\leq1,
\qquad 0\leq s\leq1.
\end{equation}
For $\omega\in\Omega$, let
$T_\omega=f(\omega)^*f(\omega)$,
and define
\[
g_\pm(\omega)
=
f(\omega)\bigl(I\pm r(T_\omega)\bigr).
\]
By continuous functional calculus,
$g_\pm\in C(\Omega,B(H))$.
Moreover,
$\frac{g_++g_-}{2}=f$.
By \eqref{eq:scalar-contraction-lemma},
\[
\begin{aligned}
g_\pm(\omega)^*g_\pm(\omega)
&=
T_\omega\bigl(\mathbf{1}_H\pm r(T_\omega)\bigr)^2\\
&\leq \mathbf{1}_H,
\end{aligned}
\]
and therefore
$g_\pm\in C(\Omega,B(H))_1$.
Thus
\[
f
=
\left(\frac{\mathbf{1}}{\sqrt2}\right)^*
g_+
\left(\frac{\mathbf{1}}{\sqrt2}\right)
+
\left(\frac{\mathbf{1}}{\sqrt2}\right)^*
g_-
\left(\frac{\mathbf{1}}{\sqrt2}\right)
\]
is a proper $C^*$-convex decomposition of $f$.
Since $f$ is $C^*$-extreme, $g_+$ is unitarily equivalent to $f$.
Hence, there exists a unitary
$u\in C(\Omega,B(H))$
such that
$g_+=u^*fu$.
Evaluating at $\omega_0$ gives
$g_+(\omega_0)
=
u(\omega_0)^*f(\omega_0)u(\omega_0)$,
and therefore
\begin{equation}\label{eq:positive-unitary-equivalence}
g_+(\omega_0)^*g_+(\omega_0)
=
u(\omega_0)^*Tu(\omega_0).
\end{equation}
Define
$\psi(s)=s(1+r(s))^2$.
Then
$g_+(\omega_0)^*g_+(\omega_0)=\psi(T)$.
Since $\phi=1$ on $J$, for $s\in J$ we have
$r(s)=\sqrt{\frac{c}{s}}-1$,
and hence
\[
\psi(s)
=
s\left(\sqrt{\frac{c}{s}}\right)^2
=c.
\]
Therefore,
\[
\psi(T)E_T(J)=cE_T(J).
\]
Since $E_T(J)\neq0$, $c$ is an eigenvalue of $\psi(T)$.
On the other hand, $c$ was chosen not to be an eigenvalue of $T$.
Hence $\psi(T)$ cannot be unitarily equivalent to $T$, contradicting
\eqref{eq:positive-unitary-equivalence}.
Therefore, $T=x^*x$ is a projection, and hence $x=f(\omega_0)$ is a
partial isometry. Since $\omega_0$ was arbitrary, $f(\omega)$ is a
partial isometry for every $\omega\in\Omega$.
\end{proof}
We do not know whether Lemma \ref{lem:pointwise-partial-isometry} holds for any Hilbert space. 
\begin{theorem}\label{thm:pointwise-cstar-extreme}
Let $\Omega$ be a compact Hausdorff space 
and 
% let $H$ be a separable
% Hilbert space. If
$f\in C^*\text{-}\operatorname{ext}\bigl(C(\Omega,B(H))_1\bigr)$.
Suppose $f(\omega)$ be a partial isometry for all $\omega \in \Omega$.
Then
\[
f(\omega)\in C^*\text{-}\operatorname{ext}(B(H)_1)
\qquad\text{for every }\omega\in\Omega.
\]
Consequently,
$f\in C(\Omega,B(H))_1)$
is a strongly extreme point.
\end{theorem}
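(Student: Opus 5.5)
The plan is to reduce everything to the known structure of extreme points of $B(H)_1$. By \cite[Theorem~10.2]{T}, a partial isometry $x\in B(H)_1$ is a linear extreme point exactly when
\[
(\mathbf{1}_H-xx^*)B(H)(\mathbf{1}_H-x^*x)=\{0\}.
\]
In $B(H)$ this happens iff $x$ is an isometry or a co-isometry. By \cite{HR}, in a von Neumann algebra the $C^*$-extreme, extreme and strongly extreme points of the unit ball coincide. So it suffices to show that for every $\omega_0\in\Omega$ the partial isometry $x=f(\omega_0)$ satisfies $(\mathbf{1}_H-xx^*)B(H)(\mathbf{1}_H-x^*x)=\{0\}$.

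Suppose not, and fix $\omega_0$ with $p=\mathbf{1}_H-x^*x\neq 0$ and $q=\mathbf{1}_H-xx^*\neq 0$. Choose unit vectors $\xi\in pH$ and $\eta\in qH$, and let $y$ be the rank-one operator $y(\zeta)=\tfrac12\langle \zeta,\xi\rangle\eta$. The point of the factor $\tfrac12$ is that $x+y$ has singular values $1$ and $\tfrac12$, so $x+y$ is not a partial isometry.

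To globalize, use the hypothesis that every $f(\omega)$ is a partial isometry. Then $p_\omega=\mathbf{1}_H-f(\omega)^*f(\omega)$ and $q_\omega=\mathbf{1}_H-f(\omega)f(\omega)^*$ are projections depending norm-continuously on $\omega$. Define
\[
h(\omega)=q_\omega\, y\, p_\omega ,
\]
which is continuous and satisfies $h(\omega_0)=y$. Let $\varphi\in C(\Omega)$ with $0\le\varphi\le 1$ and $\varphi(\omega_0)=1$ (Urysohn), and set
\[
g_\pm(\omega)=f(\omega)\pm\varphi(\omega)h(\omega).
\]
Since $h(\omega)$ vanishes on the initial space of $f(\omega)$ and has range orthogonal to the range of $f(\omega)$, we get $g_\pm(\omega)^*g_\pm(\omega)=f(\omega)^*f(\omega)+\varphi(\omega)^2h(\omega)^*h(\omega)$, a sum of two positive operators living on orthogonal subspaces. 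Hence
\[
\|g_\pm(\omega)\|=\max\bigl(\|f(\omega)\|,\ \varphi(\omega)\|h(\omega)\|\bigr)\le 1 .
\]
This norm computation, together with the continuity of $h$, is the one step needing care.

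Then $f=\tfrac12(g_++g_-)$, and writing this with coefficients $\tfrac{1}{\sqrt2}\mathbf{1}$ gives a proper $C^*$-convex combination, exactly as in the proofs above. $C^*$-extremality yields a unitary $u\in C(\Omega,B(H))$ with $g_+=u^*fu$. Evaluating at $\omega_0$ shows $x+y$ is unitarily equivalent to the partial isometry $x$, so it is itself a partial isometry. This contradicts $(x+y)^*(x+y)=x^*x+\tfrac14\,\xi\otimes\xi$, which has eigenvalue $\tfrac14$ and is therefore not a projection. Thus each $f(\omega)$ is an isometry or co-isometry, i.e.\ extreme, hence $C^*$-extreme in $B(H)_1$.

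For the final assertion, \cite{HR} shows that each $f(\omega)$, being an extreme point of the von Neumann algebra unit ball $B(H)_1$, is strongly extreme. Then \cite[Proposition~7]{DHS} gives that $f$ is a strongly extreme point of $C(\Omega,B(H))_1$.
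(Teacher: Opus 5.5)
Your proof is correct and is essentially the paper's argument. You use the same perturbation $f\pm h$ with $h(\omega)=q(\omega)\,y\,p(\omega)$ built from a scaled rank-one operator from $p(\omega_0)H$ to $q(\omega_0)H$, the same orthogonality $f^*h=h^*f=0$ to stay in the unit ball, and the same contradiction from the eigenvalue $\tfrac14$ (the paper's $\varepsilon^2$) of $g_+(\omega_0)^*g_+(\omega_0)$.

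The differences are cosmetic. Your Urysohn cutoff $\varphi$ is not needed. You reach $C^*$-extremality of isometries and co-isometries via Tomita's criterion and the coincidence result of \cite{HR}, rather than the characterization in \cite{HMP}. You also make explicit the appeal to \cite[Proposition~7]{DHS} that the paper leaves implicit in its final assertion.
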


\begin{proof}
% By Lemma~\ref{lem:pointwise-partial-isometry}, $f(\omega)$ is a partial
% isometry for every $\omega\in\Omega$. 
Define
\[
p(\omega)=\mathbf{1}_H-f(\omega)^*f(\omega),
\qquad
q(\omega)=\mathbf{1}_H-f(\omega)f(\omega)^*.
\]
Then $p$ and $q$ are norm continuous projection-valued functions.
We claim that
$p(\omega)=0$
\text{or}
$q(\omega)=0$
for every $\omega\in\Omega$.
Suppose, to the contrary, that for some $\omega_0\in\Omega$,
$p(\omega_0)\neq0$
\text{and}
$q(\omega_0)\neq0$.
Write
$p_0=p(\omega_0)$,
$q_0=q(\omega_0)$.
Choose unit vectors
\[
\xi\in p_0H,
\qquad
\eta\in q_0H,
\]
and define the rank-one partial isometry $k\in B(H)$ by
$k\zeta=\langle\zeta,\xi\rangle\eta$.
Then
$q_0kp_0=k$ and
$k^*k$ is a rank one projection.
% =P_{\mathbb C\xi}.
Fix $0<\varepsilon<1$ and define
\[
h(\omega)
=
\varepsilon q(\omega)kp(\omega),
\qquad\omega\in\Omega,
\]
and
$F_\pm=f\pm h$.
Since $p$ and $q$ are norm continuous, one has
$h,F_\pm\in C(\Omega,B(H))$.
Since $f(\omega)$ is a partial isometry, we obtain
$f^*q=0$ \text{and}
$qf=0$.
Thus
$f^*h=h^*f=0$.
Moreover,
\[
\begin{aligned}
h^*h
&=
\varepsilon^2pk^*qkp\\
&\leq
\varepsilon^2pk^*kp\\
&\leq
\varepsilon^2p.
\end{aligned}
\]
It follows that
\[
\begin{aligned}
F_\pm^*F_\pm
&=
f^*f+h^*h\\
&\leq
I-p+\varepsilon^2p\\
&\leq I.
\end{aligned}
\]
Hence
$F_\pm\in C(\Omega,B(H))_1$,
and
$f=\frac{F_++F_-}{2}$
is a proper $C^*$-convex decomposition.
Since $f$ is $C^*$-extreme, $F_+$ must be unitarily equivalent to $f$.
However, at $\omega_0$,
$h(\omega_0)=\varepsilon k$,
so
\[
F_+(\omega_0)^*F_+(\omega_0)
=
\mathbf{1}-p_0+\varepsilon^2k^*k.
\]
Since $p_0\xi=\xi$ and $k^*k\xi=\xi$,
\[
F_+(\omega_0)^*F_+(\omega_0)\xi
=
\varepsilon^2\xi.
\]
Thus $\varepsilon^2\in(0,1)$ is an eigenvalue of
$F_+(\omega_0)^*F_+(\omega_0)$. Hence, this operator is not a projection,
and therefore $F_+(\omega_0)$ is not a partial isometry.
But $f(\omega_0)$ is a partial isometry. Thus $F_+(\omega_0)$ cannot be
unitarily equivalent to $f(\omega_0)$, a contradiction.
Therefore,
\[
p(\omega)=0
\qquad\text{or}\qquad
q(\omega)=0
\]
for every $\omega\in\Omega$. Equivalently,
\[
f(\omega)^*f(\omega)=\mathbf{1}_H
\qquad\text{or}\qquad
f(\omega)f(\omega)^*=\mathbf{1}_H.
\]
Thus $f(\omega)$ is either an isometry or a coisometry.
By 
\cite[Theorem~1.1 and Corollary~1.2]{HMP}, the $C^*$-extreme points
of $B(H)_1$ are precisely the isometries and coisometries. Hence
$f(\omega)\in C^*\text{-}\operatorname{ext}(B(H)_1)$
 for all $\omega\in\Omega$.
Finally, by \cite[Theorem~3.2]{HR}, isometries and coisometries are strongly extreme points of
$B(H)_1$.
\end{proof}

\begin{corollary}
    \label{cor:H sep}
    Let $H$ be a separable Hilbert space and 
$f\in C^*\text{-}\operatorname{ext}\bigl(C(\Omega,B(H))_1\bigr)$. Then
\[
f(\omega)\in C^*\text{-}\operatorname{ext}(B(H)_1)
\qquad\text{for every }\omega\in\Omega.
\]
\end{corollary}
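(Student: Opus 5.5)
This corollary is obtained by combining Lemma \ref{lem:pointwise-partial-isometry} with Theorem \ref{thm:pointwise-cstar-extreme}. Let $H$ be separable and $f\in C^*\text{-}\operatorname{ext}\bigl(C(\Omega,B(H))_1\bigr)$. First, Lemma \ref{lem:pointwise-partial-isometry} applies directly. Its hypotheses are exactly a compact Hausdorff $\Omega$, a separable $H$, and $C^*$-extremality of $f$. It gives that $f(\omega)$ is a partial isometry for every $\omega\in\Omega$.

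Second, this pointwise partial-isometry property is precisely the extra hypothesis required in Theorem \ref{thm:pointwise-cstar-extreme}. We also still have $f\in C^*\text{-}\operatorname{ext}\bigl(C(\Omega,B(H))_1\bigr)$. Invoking that theorem yields $f(\omega)\in C^*\text{-}\operatorname{ext}(B(H)_1)$ for every $\omega\in\Omega$, which is the claim.

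As a by-product, the same theorem together with \cite[Theorem~3.2]{HR} and \cite[Proposition~7]{DHS} shows that $f$ is a strongly extreme point of $C(\Omega,B(H))_1$. Indeed, each $f(\omega)$ is an isometry or a coisometry, hence strongly extreme in $B(H)_1$.

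There is no real obstacle here. The only point to check is that separability is used solely through Lemma \ref{lem:pointwise-partial-isometry}, where it ensures $T$ has countably many eigenvalues so that a non-eigenvalue $c$ can be chosen. Theorem \ref{thm:pointwise-cstar-extreme} itself holds for arbitrary $H$.
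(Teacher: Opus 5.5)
Your proposal is correct and is exactly the paper's argument: Lemma~\ref{lem:pointwise-partial-isometry} gives that each $f(\omega)$ is a partial isometry, and this is the only step that uses separability of $H$. Theorem~\ref{thm:pointwise-cstar-extreme} then yields $f(\omega)\in C^*\text{-}\operatorname{ext}(B(H)_1)$ for every $\omega\in\Omega$.
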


\begin{proof}
    It follows immediately from Lemma \ref{lem:pointwise-partial-isometry} and Theorem \ref{thm:pointwise-cstar-extreme}.
\end{proof}

 In the proof of the next corollary, we use a result of Chu involving spaces with the Radon-Nikodým property (RNP), see \cite{C}. We refer \cite{DU} for more details.
 For a Hilbert space $H$, let ${\mathcal K}(H)$ denote the space of compact operators.
 \begin{corollary}
     Let $\cW^\ast$ be a von Neumann algebra such that the predual has the RNP and is separable. If
\[
f\in C^*\text{-}\operatorname{ext}\bigl(C(\Omega, \cW^\ast\bigr),
\]
then
\[
f(\omega)\in C^*\text{-}\operatorname{ext}(\cW^\ast_1)
\qquad\text{for every }\omega\in\Omega.
\]
 \end{corollary}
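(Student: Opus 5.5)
The plan is to reduce to the case already handled, Corollary \ref{cor:H sep}, using Chu's structure theorem \cite{C}. That theorem says a von Neumann algebra whose predual has the RNP is an $\ell^\infty$-direct sum of type I factors, $\cW^\ast=\bigoplus_{i\in I} B(H_i)$. Separability of the predual forces $I$ to be countable and each $H_i$ to be separable. The predual of $B(H_i)$ is the trace class $\mathcal{T}(H_i)$, a complemented piece of the separable predual, and $\mathcal{T}(H)$ is separable only if $H$ is.

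With this decomposition, I would use the coordinate projections $\pi_i:\cW^\ast\to B(H_i)$. These are unital surjective $*$-homomorphisms, and they induce $*$-homomorphisms $\Pi_i: C(\Omega,\cW^\ast)\to C(\Omega,B(H_i))$, $f\mapsto \pi_i\circ f$.

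The key step is to show that $f_i:=\pi_i\circ f$ is $C^\ast$-extreme in $C(\Omega,B(H_i))_1$. Suppose $f_i=\sum_j t_j^* g_j t_j$ is a proper $C^\ast$-convex combination in $C(\Omega,B(H_i))_1$.
\begin{itemize}
\item Lift it to $C(\Omega,\cW^\ast)$ by setting $\tilde t_j=t_j$ and $\tilde g_j=g_j$ in coordinate $i$.
\item In the other coordinates, use $\tilde t_j=\lambda_j\mathbf{1}$ with $\lambda_j>0$, $\sum\lambda_j^2=1$, and $\tilde g_j=f$.
\end{itemize}
These lifts are continuous and invertible, satisfy $\sum\tilde t_j^*\tilde t_j=\mathbf 1$, and give $f=\sum\tilde t_j^*\tilde g_j\tilde t_j$. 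Continuity in the $\ell^\infty$ sum is fine, since only one coordinate differs from the constant or identity choices. $C^\ast$-extremality of $f$ then yields unitaries $\tilde u_j$ with $\tilde g_j=\tilde u_j^* f\tilde u_j$. Applying $\Pi_i$ gives unitary equivalence of $g_j$ and $f_i$, as in Remark \ref{rem:unitary in direct sum}.

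Corollary \ref{cor:H sep} then shows that $f_i(\omega)=f(\omega)(i)$ is $C^\ast$-extreme in $B(H_i)_1$ for every $i$ and $\omega$. Finally, \cite[Theorem~3.6]{HR} on $\ell^\infty$-direct sums says an element is $C^\ast$-extreme precisely when each coordinate is. Hence $f(\omega)\in C^*\text{-}\operatorname{ext}(\cW^\ast_1)$.

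I expect the main obstacle to be invoking Chu's theorem correctly, in particular that the RNP of the predual gives an atomic, type I, direct sum of $B(H_i)$'s, together with the separability bookkeeping for the $H_i$. The lifting argument is routine. If Chu's result is only stated as "$\cW^\ast_*$ has RNP iff $\cW^\ast$ is atomic", I would supplement it with the standard fact that atomic von Neumann algebras are direct sums of type I factors $B(H_i)$.
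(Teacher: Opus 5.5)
Your proof is correct and follows the same route as the paper. Chu's theorem plus separability reduces $\cW^\ast$ to $\bigoplus_{k} B(H_k)$ with countably many separable $H_k$, and Corollary~\ref{cor:H sep} is then applied coordinatewise. Your lifting argument (showing $\pi_i\circ f$ is $C^\ast$-extreme in $C(\Omega,B(H_i))_1$) together with \cite[Theorem~3.6]{HR} supplies exactly the detail the paper compresses into ``applying the proof of the above corollary coordinate-wise,'' and your insistence on a genuine $*$-isomorphism rather than merely an isometry of preduals is a welcome extra bit of care.
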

 \begin{proof}
     Since $\cW$ has the RNP, it follows from Chu that there is a family $\{H_{\alpha}\}_{\alpha \in \Delta}$ of Hilbert spaces such that $\cW$ is isometric to the $\ell^1$-direct sum of the spaces, ${\mathcal K}(H_{\alpha})^\ast$. Since $\cW$ is also separable, we get that the index set $\Delta$ is countable, and the Hilbert spaces are separable. Thus $\cW^\ast$ is isometrically $C^\ast$-isomorphic to  $\oplus_{k=1}^\infty {\mathcal B}(H_k)$. Now, the conclusion follows by applying the proof of the above corollary coordinate-wise.
 \end{proof}
 
 In the case of a von Neumann algebra, in \cite[ Corollary~3.11]{HR}, we showed that any vector in the unit ball is an average of two $C^\ast$ extreme points. In particular, they generate the von Neumann algebra. We do not know whether a similar result holds for vector-valued functions taking values in a $C^\ast$-algebra. See the recent article \cite{AC} for more information on spans and convex combinations of boundary-valued functions. 
 
 For a discrete set $\Gamma$, let $\beta(\Gamma)$ denote the Stone-\v{C}ech compactification.
 \begin{proposition}
     Let $f \in C(\beta(\Gamma), M_n)_1$ .Then $f = \frac{g+h}{2}$ , where $g,h$ are unitaries.
 \end{proposition}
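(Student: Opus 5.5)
The plan is to exploit the fact that, since $M_n$ is finite dimensional and $\Gamma$ is discrete, $C(\beta(\Gamma),M_n)$ is nothing but $\ell^\infty(\Gamma,M_n)$. Concretely, restriction to $\Gamma$ identifies $C(\beta(\Gamma),M_n)$ with the bounded $M_n$-valued functions on $\Gamma$. Indeed, every bounded function $\Gamma\to M_n$ takes values in a compact ball of $M_n$. By the universal property of the Stone--\v{C}ech compactification (applied coordinatewise to the $n^2$ scalar entries), it extends uniquely to a continuous function on $\beta(\Gamma)$. Hence it suffices to solve the problem pointwise on $\Gamma$, where no continuity is required. Density of $\Gamma$ in $\beta(\Gamma)$ then transports the algebraic identities to all of $\beta(\Gamma)$.

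\textbf{Step 1 (the scalar-matrix case).} Given $A\in M_n$ with $\|A\|\le 1$, write the polar decomposition $A=V|A|$. Since $M_n$ is finite dimensional, the partial isometry can be extended to a \emph{unitary} $V\in M_n$; this uses $\dim\ker A=\dim\ker A^*$. Put
\[
W=|A|+i\sqrt{\mathbf{1}_n-|A|^2},
\]
which is well defined because $0\le |A|\le \mathbf{1}_n$. Since the two summands commute and are self-adjoint,
\[
W^*W=WW^*=|A|^2+(\mathbf{1}_n-|A|^2)=\mathbf{1}_n ,
\]
so $W$ is unitary. Moreover $|A|=\frac{W+W^*}{2}$. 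Therefore $A=\frac{VW+VW^*}{2}$ is an average of the two unitaries $VW$ and $VW^*$.

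\textbf{Step 2 (pointwise choice on $\Gamma$).} For each $\gamma\in\Gamma$, apply Step 1 to $f(\gamma)$ and choose unitaries $g_0(\gamma),h_0(\gamma)$ with $f(\gamma)=\frac{g_0(\gamma)+h_0(\gamma)}{2}$. This is an arbitrary choice with no regard to continuity, which is harmless because $\Gamma$ is discrete. The functions $g_0,h_0$ are bounded by $1$, so they extend to continuous functions $g,h\in C(\beta(\Gamma),M_n)$.

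\textbf{Step 3 (extension to $\beta(\Gamma)$).} The continuous functions $g^*g-\mathbf{1}$, $gg^*-\mathbf{1}$, $h^*h-\mathbf{1}$, $hh^*-\mathbf{1}$ and $f-\frac{g+h}{2}$ all vanish on the dense set $\Gamma$. Hence they vanish identically on $\beta(\Gamma)$, so $g,h$ are unitaries of $C(\beta(\Gamma),M_n)$ with $f=\frac{g+h}{2}$.

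The only point needing care is the existence of the continuous extension in Step 2. The usual obstacle is that polar decompositions do not vary continuously, and this is exactly what is sidestepped by working on the discrete set $\Gamma$ and using the universal property of $\beta(\Gamma)$ together with compactness of the unit ball of $M_n$. Finite dimensionality of $M_n$ is used twice: for the unitary polar factor in Step 1, and for the compactness of the range needed for the extension in Step 2.
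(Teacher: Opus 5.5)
Your proposal is correct and follows essentially the same route as the paper. Both arguments choose unitaries pointwise on the discrete set $\Gamma$, extend them to $\beta(\Gamma)$ using compactness of the unit ball of $M_n$ (the Stone--\v{C}ech universal property), and use density of $\Gamma$ to conclude that the extensions are unitary-valued with $f=\frac{g+h}{2}$. The one addition in your write-up is an explicit proof, via a unitary polar factor and $W=|A|+i\sqrt{\mathbf{1}_n-|A|^2}$, that every contraction in $M_n$ is an average of two unitaries; the paper takes this fact as known.
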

\begin{proof}
    We know that for any $\omega \in \Gamma$, $f(\omega)$ is an average of two unitaries. Thus we have functions $g,h$ on $\Gamma$ with unitaries as values and such that $f = \frac{g+h}{2}$ on $\Gamma$. By the compactness of the unit ball, we can extend $g, ~ h$ as continuous functions to $\beta(\Gamma)$ denoted by $g',~h' \in C(\beta(\Gamma), M_n)_1$, such that $f = \frac{g'+h'}{2}$. Clearly  since $\Gamma$ is dense in $\beta(\Gamma)$, we see that $g'(\omega),~h'(\omega)$ are unitaries for $\omega \in \beta(\Gamma)$. The conclusion follows.
\end{proof}

\vspace{.3cm}
 \noindent
\textbf{Acknowledgements:} This work is part of the project ``Classification of Banach spaces using differentiability",
funded by the Anusandhan National Research Foundation (ANRF), Core Research Grant, CRG2023-000595.
The first author is a research associate (RA) in this project. The authors thank Mr. Chinmay Ajay Tamhankar (IITM) for his comments and
suggestions on Theorem~2.3.

% \vspace{.3cm}
% \textbf{Declaration:}
% No conflicts of interest.

\bibliographystyle{alpha}
\bibliography{references}
\end{document}